\def\R{{\mathbb R}}
\def\N{{\mathbb N}}
\def\om{\omega}
\def\f{\frac}
\def\p{\partial}
\def\q{\quad}
\def\na{\nabla}
\def\nb{\nonumber}
\def\bi{{\mathbf i}}
\def\su{\mathrm{supp}}
\def\ssu{\mathrm{sing\text{-}supp}}
\def\CT{\mathrm{CT}}
\def\MA{\mathrm{MA}}
\def\x{{\mathbf x}}
\def\y{{\mathbf y}}
\def\z{{\mathbf z}}
\def\WF{\mathrm{WF}}
\def\TV{\mathrm{TV}}
\def\mC{{\mathcal C}}
\def\mI{{\mathcal I}}
\def\mQ{{\mathcal Q}}
\def\mR{{\mathcal R}}
\def\msD{{\mathscr D}}
\def\msE{{\mathscr E}}
\def\msF{{\mathscr F}}
\def\bi{\begin{itemize}} \def\ei{\end{itemize}}
\def\be{\begin{eqnarray*}}
\def\ee{\end{eqnarray*}}
\def\etal{{\it et al} \:}
\def\0{{\mathbf 0}}
\newcommand{\beq}{\begin{equation}}
\newcommand{\eeq}{\end{equation}}
\def\xxi{{\boldsymbol\xi}}
\def\eet{{\boldsymbol\eta}}
\def\bth{\boldsymbol{\theta}}
\def\eref#1{\eqref{#1}}
\newcommand{\eps}{\varepsilon}
\def\XXint#1#2#3{{\setbox0=\hbox{$#1{#2#3}{\int}$ }
\vcenter{\hbox{$#2#3$ }}\kern-.55\wd0}}
\begin{document}

\title{Characterization of Metal Artifacts in X-ray Computed Tomography
}
\subtitle{}

\titlerunning{Characterization of metal artifacts in CT}        

\author{Hyoung Suk Park \and Jae Kyu Choi \and Jin Keun Seo
}

\authorrunning{H.S.Park \and J.K.Choi \and J.K.Seo} 

\institute{Hyoung Suk Park \and Jae Kyu Choi \and Jin Keun Seo \at
              Department of Computational Science and Engineering, Yonsei University, 120-749 Korea \\
              Tel.: 82-2-2123-6121\\
              Fax: 82-2-2123-8194\\
              \email{jiro7733@yonsei.ac.kr \and jaycjk@yonsei.ac.kr \and seoj@yonsei.ac.kr}           
}

\date{Received: date / Accepted: date}

\maketitle

\begin{abstract} Metal streak artifacts in X-ray computerized tomography (CT) are rigorously characterized here using the notion of the wavefront set from microlocal analysis. The metal artifacts are caused mainly from the mismatch of the forward model of the filtered back-projection; the presence of metallic subjects in an imaging subject violates the model's assumption of the CT sinogram data being the Radon transform of an image. The increasing use of metallic implants has increased demand for the reduction of metal artifacts in the field of dental and medical radiography. However, it is a challenging issue due to the serious difficulties in analyzing the X-ray data, which depends nonlinearly on the distribution of the metallic subject.  In this paper, we found that the metal streaking artifacts cause mainly from the boundary geometry of the metal region. The metal streaking artifacts are produced only when the wavefront set of the Radon transform of the characteristic function of a metal region does not contain the wavefront set of the square of the Radon transform.  We also found a sufficient condition for the non-existence of the metal streak artifacts.
\keywords{Metal artifact \and Inverse problem \and CT \and Wavefront set \and Singularity propagation}
\subclass{35R30 \and 65N21 \and 42B99 \and 45E10}
\end{abstract}

\newpage

\section{Introduction}

X-ray computed tomography (CT) is one of the most powerful diagnostic tools for medical and dental imaging. It provides tomographic images of the human body by assigning an X-ray attenuation coefficient to each pixel \cite{Tohnak2007}. However, patients with metal implants may not receive the benefits of CT scanning because the quality of the image can be greatly degraded by metal streaking artifacts that appear as dark and bright streaks. Medical implants such as coronary stents, orthopedic implants, surgical clips, and dental fillings disturb the accurate visualization of anatomical structures, rendering the images useless for diagnosis. Therefore, various research efforts have sought to develop metal artifact reduction methods, but this goal remains one of the major challenges facing CT imaging.

This paper aims, through rigorous mathematical analysis, to characterize the structure of metal streaking artifacts.  The artifacts arise because of the mismatch between the CT image reconstruction algorithm (filtered back projection (FBP) algorithm \cite{Bracewell1967}) and the nonlinear variation in the X-ray data that occurs in the presence of a metallic object.
In CT, X-ray projection data $P(\varphi,s)$ are collected for a slice after passing X-ray beams in different directions through an object, where $s$ indicates the position of the projected line and $\varphi$ is the angle of the projection as shown in Fig. \ref{fig:basic_CT}.  The FBP algorithm is based on the assumption that the  X-ray projection data $P(\varphi,s)$ is in the range of the Radon transform \cite{Radon2005},  with its domain being  $\msE'(\R^2)$, the space of distributions whose supports are compact; accepting this assumption, there exists $f$ in $\msE'(\R^2)$ such that
\begin{align}\label{Rf00}
P(\varphi,s)=\mR f(\varphi,s)\q\mbox{for  } (\varphi,s)\in(-\pi,\pi]\times\R
\end{align}
where $\mR f(\varphi,s):=\int_{\R^2}f(\x)\delta(\x\cdot\bth-s)d\x$, $\bth=(\cos\varphi,\sin\varphi)$, and $\x=(x_1,x_2)\in\R^2$ \cite{Natterer1986}.   Then, the tomographic image $f$  can be reconstructed by the following FBP formula:
\begin{align}\label{FBP_formula}
f=\f{1}{4\pi}\mR^* \mI^{{\tiny\mbox{$-1$}}}\mR f\q\q \mbox{for }~f\in \msE'(\R^2)
\end{align}
where $\mR^*$ is the adjoint of Radon transform given by $\mR^* h(\x)=\int_{-\pi}^{\pi} h(\varphi,\x\cdot\bth)d\varphi$, and $\mI^{{\tiny\mbox{$-1$}}}g$ is the Riesz potential given by  $\mI^{{\tiny\mbox{$-1$}}}g(s)=\f{1}{2\pi}\int_\R\int_\R e^{i(s'-s)\om} g(s') |\om| ds'd\om $. FBP  works well for CT imaging  because for most human tissues, the X-ray data $P(\varphi,s)$ approximately  satisfy the linear assumption of \eref{Rf00}.

\begin{figure}[ht]
\begin{center}
\includegraphics[width=12cm]{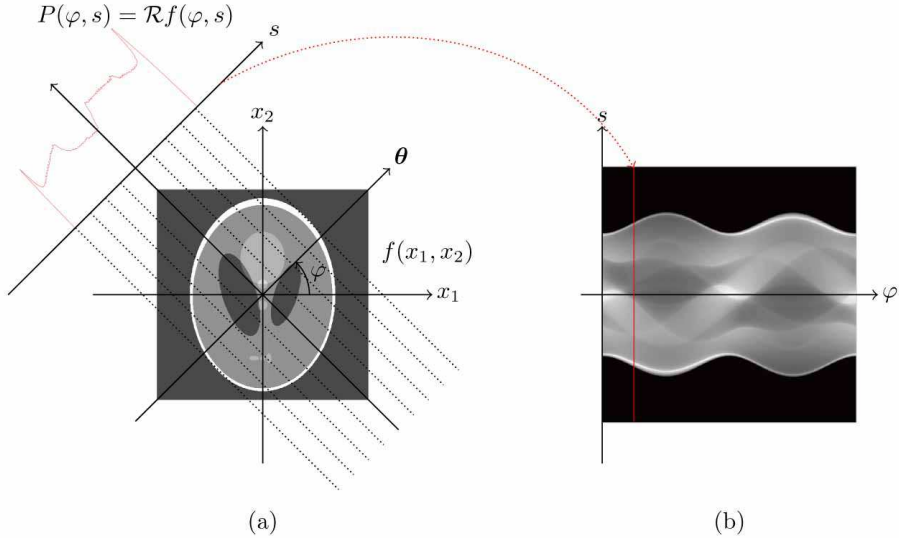}
\caption{(a) Schematic diagram of CT data acquisition, (b) Resulting sinogram $P(\varphi,s)$.}\label{fig:basic_CT}
\end{center}
\end{figure}

However,  metallic objects in the imaging slice cause the X-ray data $P(\varphi,s)$ to fail to satisfy the assumption of \eref{Rf00}. This is because the incident X-ray beams comprise a number of photons of different energies ranging between $\underline{E}$ and $\overline{E}$, and  the X-ray attenuation coefficients vary with $E$. We use the notation $f_E(\x)$ to describe the attenuation coefficient at position $\x$ and at  energy level $E$. Assuming that there is no scattered radiation or noise, the data $P(\varphi,s)$ can be expressed by Beer's law \cite{Beer1852,Lambert1892}:
  \begin{align}\label{P_theta}
P(\varphi,s)=-\ln\left(\int_{\underline{E}}^{\overline{E}}\eta(E)\exp\big\{-\mR f_E(\varphi,s)\big\}dE\right)
\end{align}
where $\eta$ is a probability density function supported on $[\underline{E},\overline{E}]$. Hence, the assumption of \eref{Rf00} may not hold true when  $f_E$ varies  with $E$. In particular, the attenuation coefficient of a metal object varies greatly with $E$, and hence  the presence of metal objects in the scan slice  leads  the data $P(\varphi,s)$ to violate the assumption of \eref{Rf00}.

Assuming $f_E$ is differentiable with respect to $E\in [\underline{E},\overline{E}]$, the mismatch between the data $P(\varphi,s)$ and $\mR f_{E_0}$ for an effective energy $E_0$, which is called the beam hardening effect, can be analyzed by the quantity \cite{Seo2012}
\begin{equation}\label{mismatch}
[P-\mR f_{E_0}](\varphi,s)\approx\int_{\underline{E}}^{\overline{E}}\eta(E')\int_{E_0}^{E'}\left[ e^{-\mathcal R[f_{E}-f_{E_0}](\varphi,s)}\mathcal R\left[\f{\p f_{E}}{\p E}\right](\varphi,s)\right]dE dE'
\end{equation}
where $E_0$ is a mean energy level $E_0\in [\underline{E},\overline{E}]$. Hence, the condition of $P-\mR f_{E_0}=0$ (perfect matching) requires that we have either $\underline{E}-\overline{E}=0$ (monochromatic X-rays.  Otherwise, we say that X-rays are polychromatic.) or $\f{\p f_{E}}{\p E}=0$. The absolute value of $\f{\p f_{E}}{\p E}$ is large for metallic subject (for instance, $\f{\p f_E}{\p E}\approx -1650\mathrm{cm^2/g\cdot MeV}$ on $[0.01\mathrm{MeV},0.08\mathrm{MeV}]$ for gold \cite{J.H.Hubbell1995}), and it is difficult to generate monochromatic X rays for routine clinical use \cite{Dilmanian1997,Natterer2002}.

To carry out rigorous analysis for metal artifacts, let the metal region in the imaging slice occupy the domain $D$. Since $f_E \approx f_{E_0}$ in most tissues and $|\f{\p f_E}{\p E}|$ is large in $D$, we can approximate $f_E$ by
\begin{align}\label{f_E_simple}
 f_E(\x)\approx f_{E_0}(\x) + \alpha(E-E_0)\chi_D(\x)
\end{align}
where $\chi_D$ is the characteristic function of $D$ and $\alpha=\f{\p f_E}{\p E}$ in the metal $D$.

Based on the linearized assumption \eref{f_E_simple}, we characterize the metal streaking artifacts using the framework of the wavefront set \cite{Hormander1983,Petersen1983,Tr`eves1980}. The mismatch of the projection data is expressed by
\begin{align*}
-\f{1}{4\pi} \mR^* \mI^{{\tiny\mbox{$-1$}}} \left[\ln\left(\f{\sinh\left( \f{\alpha}{2}(\overline{E}-\underline{E})  \mR\chi_D\right)}{\f{\alpha}{2}(\overline{E}-\underline{E}) \mR\chi_D}\right)\right].
\end{align*}
We found that the metal streaking artifacts are produced only when
\begin{align*}
\WF[(\mR\chi_D)^2]\nsubseteq  \WF[\mR\chi_D]
\end{align*}
where $\WF(g)$ is the wavefront set of a function $g$ defined on $(-\pi,\pi]\times \R$. (See Theorem \ref{th32}.) We also find the necessary condition for the existence of the metal artifacts; the reconstructed image contains streaking artifacts only if $D$ is not strictly convex. (See Theorem \ref{th31}.)  Using a similar argument as in the proof of Theorem \ref{th32}, we present characterizations of other effects that cause  streaking artifacts, such as scattered radiations and  noises (See Section \ref{othereffects}.)  Finally, we provide numerical simulation results and clinical CT image to support these observations (See Fig. \ref{fig:streaking_illustration}--\ref{real_image}.)

Various works have studied the wavefront set for Radon transforms \cite{DeHoop2009,Finch2003,J.Frikel2013,Greenleaf1989,Katsevich1999,Katsevich2006,Quinto1980,Quinto1993,Quinto2006,Quinto2007,Quinto2013,Ramm1996,Ramm1993} and metal artifacts \cite{Abdoli2010,Bal2006,J.Choi2011,DeMan2001,Kalender1987,Lewitt1978,Meyer2010,Park2013,Shen2002,Wang1996,Zhao2000}, but surprisingly this paper reports as far as we know the first rigorous mathematical analysis to characterize the structure of metal streaking artifacts.

\section{Mathematical Framework}

Before providing the main results on the metal streaking artifacts, we begin with a brief summary of the basic mathematical principles on X-ray CT. Let $f_E$ denote the attenuation coefficient distribution of the slice of an object being imaged at the energy level $E$. When the X-ray pass through the slice along the direction $\bth=(\cos\varphi,\sin\varphi)$, the X-ray data $P(\varphi,s)$ is given by
\begin{align}\label{P_theta1}
P(\varphi,s):=-\ln\left[\int_{E_0-\delta}^{E_0+\delta}\eta(E)\exp\big\{-\mR f_E(\varphi,s)\big\}dE\right]
\end{align}
where $\delta =\f{\overline{E}-\underline{E}}{2}$.

We denote by $f_{\CT}$ the reconstructed CT image obtained by FBP \eref{FBP_formula}:
\begin{align}\label{f_ct}
f_{\CT}(\x)=\f{1}{4\pi}\mR^*[\mI^{{\tiny\mbox{$-1$}}}P](\x)=\f{1}{8\pi^2}\int_{-\pi}^{\pi}\int_{-\infty}^{\infty}|\om|\msF[P(\varphi,\cdot)](\om)e^{i\om\x\cdot\bth}d\om d\varphi
\end{align}
If $\f{\p f_E}{\p E} (\x)=0$ ($f_E$ is independent to $E$), then $P(\varphi,s)=\mR f_{E_0}(\varphi,s)$, and therefore FBP formula \eref{f_ct} gives
\begin{align}\label{fct:no_artifact}
f_{\CT}=f_{E_0}.
\end{align}
However, the above identity \eref{fct:no_artifact} fails when $\f{\p f_E}{\p E}\neq0$.

Assuming that $f_E$ is twice differentiable with respect to $E\in[E_0-\delta,E_0+\delta]$, $f_E(\x)$ can be expressed as
\begin{align}\label{f_E1}
f_E(\x)=f_{E_0}(\x)+(E-E_0)\f{\p f_E}{\p E}(\x)\bigg|_{E=E_0}+O(|E-E_0|^2).
\end{align}
With a properly chosen energy window, $f_E\approx f_{E_0}$ for most human tissues. However, the magnitude of $\f{\p f_E}{\p E}$ is large for metallic materials.  Hence, we assume that
\begin{align}\label{alpha_j}
\f{\p f_{E}}{\p E}\bigg|_{E=E_0}(\x)=\left\{\begin{array}{cl}
0~&\mbox{if}~\x\notin D_j\\
\alpha_j\neq0~&\mbox{if}~\x\in D_j,
\end{array}\right.
\end{align}
where $D_1,\cdots,D_N$ denote subregions of a metal region $D\subseteq\R^2$ and $\alpha_1,\cdots,\alpha_N<0$ are constants depending on the metallic materials.  Noting that $f_E(\x)$ approximately satisfies a linear relation with respect to $E$ on the practical energy window level $[E_0-\delta,E_0+\delta]$ \cite{J.H.Hubbell1995}, we assume that $f_E(\x)$ satisfies
\begin{align}\label{f_E}
f_E(\x)=f_{E_0}(\x)+(E-E_0)\sum_{j=1}^N\alpha_j\chi_{D_j}(\x).
\end{align}
Here, $\chi_{D_j}$ denotes the characteristic function of $D_j$; $\chi_{D_j}=1$ in $D_j$ and $0$ otherwise.

For simplicity's sake, we assume $\eta=1/2\delta$ and $\alpha_1=\cdots=\alpha_N=\alpha$, which simplify the expression of  $f_E(\x)$ and $P(\varphi,s)$:
\begin{align}
\label{f_E_simple1} f_E(\x)&=f_{E_0}(\x)+\alpha(E-E_0)\chi_D(\x)\\
\label{P_theta_simple} P(\varphi,s)&=-\ln\left(\f{1}{2\delta}\int_{E_{0}-\delta}^{E_0+\delta}\exp\{-\mR f_{E_0}(\varphi,s)-\alpha (E-E_0)\mR \chi_D(\varphi,s)\}dE\right)
\end{align}

In order to explain the metal artifacts viewing as the singularities in an image, we need to choose proper spaces to contain $f_E(\x)$ and the projection data $P(\varphi,s)$. Let $C_0^{\infty}(\R^2)$ denote the space of smooth and compactly supported functions on $\R^2$. Let $\msD'(\R^2)$ denote the space of distributions, continuous linear functionals on $C_0^{\infty}(\R^2)$.  For $u\in\msD'(\R^2)$, its support, denoted as $\su(u)$, is the smallest closed subset of $\R^2$ outside of which $u$ vanishes. Throughout this paper, we assume:
\begin{enumerate}
\item[A1.] $f_E\in \msE'(\R^2)$, where $\msE'(\R^2)$ is the space of distributions of compact support.
\item[A2.] $P\in\msE'((-\pi,\pi]\times\R)$, where  $\msE'((-\pi,\pi]\times\R)$ is the space of distributions on $(-\pi,\pi]\times\R$ which are compactly supported with respect to the second variable.
\item[A3.] In the metal region $D$, $f_{E_0}$ satisfies
\begin{align*}
f_{E_0}(\x)\geq C\sup_{\x\in\overline{D}^c}f_{E_0}(\x)~~~~~~\mbox{for some }~C>1.
\end{align*}
\end{enumerate}

The following proposition expresses the decomposition of the filtered backprojected CT image $f_{\CT}$ into the metal artifact-free term ($f_{E_0}$) and the metal artifact term ($f_{\MA}$).
\begin{proposition}\label{proposition22} The  $f_{\CT}$ in \eref{f_ct} can be decomposed into
\begin{align}\label{fct}
f_{\CT}(\x)=f_{E_0}(\x)+f_{\MA}(\x)\end{align}
where $f_{\MA}$ represents the metal artifact term given by
\begin{align}\label{fma}
f_{\MA}(\x)=-\f{1}{8\pi^2}\int_{-\pi}^{\pi}\int_{-\infty}^\infty  |\om|\msF\left[\ln\left(\f{\sinh\left( \alpha\delta\mR \chi_D(\varphi,\cdot)\right)}{\alpha\delta\mR \chi_D(\varphi,\cdot)}\right)\right](\om)e^{i\omega \x\cdot\bth}d\om d\varphi.
\end{align}
\end{proposition}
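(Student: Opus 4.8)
The plan is to reduce the statement to an elementary evaluation of the energy integral in \eref{P_theta_simple}, followed by an application of the linearity of the filtered back-projection. First I would isolate the dependence on $E$ in the inner integral of \eref{P_theta_simple}. Since $\mR f_{E_0}(\varphi,s)$ does not depend on $E$, I factor $\exp\{-\mR f_{E_0}(\varphi,s)\}$ out of the integral and substitute $u=E-E_0$, which turns the remaining integral into
\begin{align*}
\f{1}{2\delta}\int_{-\delta}^{\delta}\exp\{-\alpha u\,\mR\chi_D(\varphi,s)\}\,du.
\end{align*}
This is a standard exponential integral; carrying out the integration and recognizing the hyperbolic sine gives the value $\sinh\!\left(\alpha\delta\mR\chi_D(\varphi,s)\right)\big/\left(\alpha\delta\mR\chi_D(\varphi,s)\right)$, with the convention that the quotient equals $1$ wherever $\mR\chi_D(\varphi,s)=0$.

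Taking $-\ln$ of the resulting product $\exp\{-\mR f_{E_0}\}\cdot\sinh(\alpha\delta\mR\chi_D)/(\alpha\delta\mR\chi_D)$ and using $-\ln(e^{-a}c)=a-\ln c$, I obtain the clean additive decomposition of the sinogram,
\begin{align*}
P(\varphi,s)=\mR f_{E_0}(\varphi,s)-\ln\!\left(\f{\sinh\!\left(\alpha\delta\mR\chi_D(\varphi,s)\right)}{\alpha\delta\mR\chi_D(\varphi,s)}\right),
\end{align*}
which separates the true Radon data from a purely nonlinear correction coming from the metal region.

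Next I would apply the reconstruction operator. Because both the Fourier form in \eref{f_ct} and $\f{1}{4\pi}\mR^*\mI^{{\tiny\mbox{$-1$}}}$ act linearly on $P$, I split $f_{\CT}$ into two contributions. The contribution of $\mR f_{E_0}$ is $\f{1}{4\pi}\mR^*\mI^{{\tiny\mbox{$-1$}}}\mR f_{E_0}$, which equals $f_{E_0}$ by the FBP inversion identity \eref{FBP_formula} (valid since $f_{E_0}\in\msE'(\R^2)$ by A1). The contribution of the correction term, fed through the same operator and written in the Fourier form of \eref{f_ct}, is exactly the expression for $f_{\MA}$ in \eref{fma}, which completes the decomposition $f_{\CT}=f_{E_0}+f_{\MA}$.

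The algebra above is elementary, so the step requiring the most care is the justification of these manipulations in the distribution spaces fixed in A1--A2. In particular, one should verify that the argument of the logarithm is admissible: since $\sinh(t)/t\ge 1$ for all real $t$ and is even in $t$, the quotient is positive (indeed $\ge 1$) regardless of the sign of $\alpha$, so $\ln(\sinh(\alpha\delta\mR\chi_D)/(\alpha\delta\mR\chi_D))$ is a well-defined nonnegative function that is compactly supported in the second variable and hence a legitimate input for $\mI^{{\tiny\mbox{$-1$}}}$ and $\mR^*$. I also expect to need the compact support assumption A2 to legitimize the use of $\msF$ and the interchange of the energy integration with the back-projection, and A1 to license the inversion formula on $f_{E_0}$; these are the only nontrivial points, the remainder being the substitution and the identification of the $\sinh$ integral.
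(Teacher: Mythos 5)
Your proposal is correct and follows essentially the same route as the paper: evaluate the energy integral in \eref{P_theta_simple} to obtain $P=\mR f_{E_0}-\ln\bigl(\sinh(\alpha\delta\mR\chi_D)/(\alpha\delta\mR\chi_D)\bigr)$, then substitute into \eref{f_ct} and use the FBP inversion identity on the $\mR f_{E_0}$ part. The extra remarks on positivity of the logarithm's argument and on the support assumptions are sound additions but not a different method.
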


\begin{proof}~
The direct computation of \eref{P_theta_simple} yields
\begin{align}\label{P_theta_simple2}
P(\varphi,s)&=\mR f_{E_0}(\varphi,s)-\alpha E_0\hspace{0.1em}\mR \chi_D (\varphi,s)-\ln\left(\f{1}{2\delta}\int_{E_{0}-\delta}^{E_0+\delta}\exp\{-\alpha E\hspace{0.1em}\mR \chi_D (\varphi,s)\}dE\right)\nb\\
&=\mR f_{E_0}(\varphi,s)-\ln\left(\f{\sinh(\alpha\delta\mR \chi_D (\varphi,s))}{\alpha\delta\mR \chi_D (\varphi,s)}\right).
\end{align}
Substituting \eref{P_theta_simple2} into \eref{f_ct} leads to \eref{fma}.  \qed
\end{proof}
According to Proposition \ref{proposition22}, the CT image $f_{\CT}$ in \eref{fct} is nonlinear with respect to the geometry of the metal region $D$. This nonlinear property of $f_{\CT}$ with respect to the geometry of metallic objects in the field of view is related with streaking artifacts in $f_{\CT}$, which will be explained in the following section in detail.

\section{Main Results}
This section provides a rigorous analysis of the metal artifacts.   We knew that metal artifacts are mainly caused by the large variation in the attenuation coefficients of the metals with respect to the energy level, which causes a significant distance between the projection data $P$ and the range space $\mR(\msE'(\R^2))$.  Metal streaking artifacts are closely related to the interrelation between the structure of the data $P$ and the FBP. This relation can be interpreted effectively using the Fourier integral operator and the wave front set \cite{J.J.Duistermaat1972,Hormander1971,Tr`eves1980}.

Note that for each $E\in[E_0-\delta,E_0+\delta]$, the attenuation coefficient $f_E$ is bounded and compactly supported.  Similarly, we can note that $P(\varphi,s)$ is compactly supported with respect to $s$ variable for each $\varphi\in(-\pi,\pi]$.  Hence, we can say that the spaces $\msE'(\R^2)$ and $\msE'((-\pi,\pi]\times\R)$ contain all meaningful attenuation coefficient distributions on each energy level $E$ and all practical projection data.  In addition, using the duality
\begin{align}
\int_{-\pi}^{\pi}\int_{-\infty}^{\infty}\mR f(\varphi,s)g(\varphi,s)dsd\varphi=\int_{\R^2}f(\x)\bigg[\underbrace{\int_{-\pi}^{\pi}g(\varphi,\x\cdot\bth)d\varphi}_{\mR^*g(\x)}\bigg]d\x,
\end{align}
Radon transform $\mR$ can be extended to a weakly continuous map from $\msE'(\R^2)$ to $\msE'((-\pi,\pi]\times\R)$ \cite{J.Frikel2013}.

The wavefront set is a useful tool to describe simultaneously the locations  and orientations of singularities.  First of all, $V\subseteq\R^2\setminus\{\0\}$ is called a conic set if $r\xxi\in V$ whenever $r>0$ and $\xxi\in V$.  If $V$ is an open conic set which contains $\xxi\neq\0$, we say that it is a conic neighborhood of $\xxi$.  If $U\subseteq\R^2$ and $V\subseteq\R^2\setminus\{\0\}$ is a conic set, then so is $U\times V\subseteq\R^2\times(\R^2\setminus\{\0\})$, and we say that $U\times V$ is conically compact if $U$ is compact and $V$ is conic.

\begin{definition}
Let $u\in\msD'(\R^2)$, $v\in\msE'(\R^2)$, and $\x\in\R^2$.
\begin{enumerate}
\item The singular support of $u$, denoted as $\ssu(u)$, is the smallest closed subset in $\R^2$ outside of which $u$ is $C^{\infty}$.
\item $\Sigma(v)$ is the smallest closed conic subset of $\R^2\setminus\{\0\}$ outside of which $\msF[v]$ decays rapidly. In other words, if $\xxi\notin\Sigma(v)$, then there is a conic neighborhood $V$ of $\xxi$ such that
    \begin{align*}
    \sup_{\xxi'\in V}(1+|\xxi'|)^N|\msF[v](\xxi')|<\infty~~~~~\mbox{for every }~N\in\N.
    \end{align*}
\item For $\x\in\R^2$, $\Sigma_{\x}(u)$ is a closed conic subset in $\R^2\setminus\{\0\}$ defined as
\begin{align*}
\Sigma_{\x}(u)=\bigcap\{\Sigma(\eta u):\eta\in C_c^{\infty}(\R^2),~\eta(\x)\neq0\big\}.
\end{align*}
\item The wavefront set of $u$, denoted as $\WF(u)$, is a closed conic subset in $\R^2\times(\R^2\setminus\{\0\})$ defined as
\begin{align*}
\WF(u)=\big\{(\x,\xxi)\in\R^2\times(\R^2\setminus\{\0\}):\xxi\in\Sigma_\x(u)\big\}.
\end{align*}
\end{enumerate}
\end{definition}
We can note that for $v\in\msE'(\R^2)$, $v\in C_0^{\infty}(\R^2)$ if and only if $\Sigma(v)=\emptyset$.  This means that
\begin{align*}
\ssu(u)=\big\{\x\in\R^2:\Sigma_{\x}(u)\neq\emptyset\big\}~~~~\mbox{for}~~~u\in\msD'(\R^2).
\end{align*}
If $(\x,\xxi)\in\WF(u)$, then $\xxi\in\Sigma(u)$ for $u\in\msE'(\R^2)$ \cite{Hormander1983}.

\begin{definition}Let $f_{\CT}$ be  a function  in \eref{fct}. A straight line $L_{\varphi,s}=\big\{\x=s(\cos\varphi,\sin\varphi)+t(-\sin\varphi,\cos\varphi): t\in \R\big\}$ is called a \emph{streaking artifact of} $f_\CT$  in the sense of wavefront set if it satisfies
\begin{align}\label{artifact0}
\Sigma_\x(f_{\CT})\neq \emptyset \q\mbox{for all }~ \x\in L_{\varphi,s}.
\end{align}
\end{definition}
To investigate $\WF(f_{\CT})$, we express the Radon transform $\mR$ of $f_{E_0}\in\msE'(\R^2)$ in the form of Fourier integral operator (FIO) \cite{J.J.Duistermaat1972,Hormander1971,Tr`eves1980}
\begin{align}\label{Rf}
\mR f_{E_0}(\varphi,s)&=\f{1}{2\pi}\int_{-\infty}^{\infty}\int_{\R^2}f_{E_0}(\x)e^{i\phi(\x,(\varphi,s),\om)}d\x d\om
\end{align}
with $\phi(\x,(\varphi,s),\om)=\om(s-\x\cdot\bth)$. Then, the canonical relation (wavefront set of the kernel of FIO) \cite{Guillemin1990,Quinto2006,Tr`eves1980} is given by \begin{align*}
\mC_{\mR}=\{((\varphi,s),a(-\x\cdot\bth^{\perp},1);\x,a\bth):a\neq 0~\&~\x\cdot\bth=s\},
\end{align*}
and the standard argument of the wavefront set in \cite{Hormander1983,Tr`eves1980} yields
\begin{align}\label{WFRadon}
\WF(\mR f_{E_0})&\subseteq\mC_{\mR}\circ\WF(f_{E_0})\end{align}
where
\begin{align*}
\mC_{\mR}\circ\WF(f_{E_0}):=\big\{((\varphi,s),a(-\x\cdot\bth^{\perp},1)):a\neq0,~\exists(\x,a\bth)\in\WF(f_{E_0})~\mbox{s.t.}~\x\cdot\bth=s\big\}.
\end{align*}
Similarly, we can write the backprojection $\mR^*g$ of $g\in\msE'((-\pi,\pi]\times\R)$ in the form of FIO
\begin{align*}
\mR^*g(\x)&=\int_{-\pi}^{\pi}g(\varphi,\x\cdot\bth)d\varphi=\f{1}{2\pi}\int_{-\infty}^{\infty}\int_{-\infty}^{\infty}\int_{-\pi}^{\pi}g(\varphi,s)e^{i\om(\x\cdot\bth-s)}d\varphi dsd\om,
\end{align*}
with the canonical relation $\mC_{\mR^*}$:
\begin{align*}
\mC_{\mR^*}=\{(\x,a\bth;(\varphi,s),a(-\x\cdot\bth^{\perp},1)):a\neq 0~\&~\x\cdot\bth=s\}.
\end{align*}
Then the wave front set of $\mR^*g$ satisfies
\begin{align}\label{WFbackprojection}
\WF(\mR^*g)&\subseteq\mC_{\mR^*}\circ\WF(g)\\
\nb &=\{(\x,a\bth):a\neq0,~\exists ((\varphi,s),a(-\x\cdot\bth^{\perp},1))\in\WF(g)~\text{s.t.}~\x\cdot\bth=s\}
\end{align}
Here, we extend $g\in\msE'((-\pi,\pi)\times\R)$ periodically with respect to the first variable $\varphi$ and choose $\psi\in C_0^{\infty}(\R^2)$ with $\su(\psi)\subseteq(-\pi,\pi]\times\R$.  By doing so, we can treat $\psi g$ as an element of $\msE'(\R^2)$ and find $\WF(g)$ \cite{Quinto2006}.

Now, we are ready to explain the main theorem which provides the characterization of  the metal artifacts in term of geometry of the metallic objects.
\begin{theorem}\label{th32} Let $\chi_D$ denote the characteristic function of the metal region $D\subseteq\R^2$. Let $f_{\CT}$ be the function in \eref{fct}. Then the necessary condition for existence of streaking artifacts of  $f_{\CT}$
is
\begin{align}\label{inclusionWF0}
\WF((\mR \chi_D)^2)\nsubseteq\WF(\mR \chi_D).
\end{align}
Moreover, if a line $L_{\varphi,s}$ is a streaking artifact of $f_\CT$ in the sense of wavefront set \eref{artifact0}, then $(\varphi,s)$ satisfies
\begin{align}\label{streaking3}
\dim\left(\mathrm{Span} [ \Sigma_{(\varphi,s)}(\mR\chi_D)]\right)=2
\end{align}
where $\dim(\mathrm{Span} [A])$ is the dimension of the span of the set $A$.
\end{theorem}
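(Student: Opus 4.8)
The plan is to follow the singularities of the artifact term $f_{\MA}$ through the filtered backprojection and to reduce every wavefront statement to one about $\mR\chi_D$ alone. First I would recast \eref{fma} in operator form as $f_{\MA}=-\f{1}{4\pi}\mR^{*}\mI^{-1}g$ with $g(\varphi,s)=\ln\big(\sinh(\alpha\delta\mR\chi_D)/(\alpha\delta\mR\chi_D)\big)$. The decisive structural remark is that $h(y):=\ln(\sinh y/y)$ is a smooth \emph{even} function with $h(0)=0$, so $g=H\big((\mR\chi_D)^{2}\big)$ for some smooth $H$; as $\mR\chi_D$ is continuous, composing with the smooth $H$ can create no new singularities, whence
\begin{align}\label{WFg}
\WF(g)\subseteq\WF\big((\mR\chi_D)^{2}\big).
\end{align}
Since $\mI^{-1}$ is a pseudodifferential operator with symbol $|\om|$, elliptic on the codirections $a(-\x\cdot\bth^{\perp},1)$ occurring in \eref{WFRadon} (whose $s$-frequency $a$ never vanishes), it preserves wavefront sets, and the backprojection estimate \eref{WFbackprojection} gives $\WF(f_{\MA})\subseteq\mC_{\mR^{*}}\circ\WF(g)$.

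I would prove the necessary condition \eref{inclusionWF0} by contraposition. Assume $\WF((\mR\chi_D)^{2})\subseteq\WF(\mR\chi_D)$; then \eref{WFg} yields $\WF(g)\subseteq\WF(\mR\chi_D)\subseteq\mC_{\mR}\circ\WF(\chi_D)$. The FBP identity \eref{FBP_formula} makes $\mC_{\mR^{*}}\circ\mC_{\mR}$ the diagonal — matching the middle covector $a(-\x\cdot\bth^{\perp},1)=a(-\x'\cdot\bth^{\perp},1)$ together with $\x\cdot\bth=s=\x'\cdot\bth$ forces $\x=\x'$ — so that
\begin{align*}
\WF(f_{\MA})\subseteq\mC_{\mR^{*}}\circ\mC_{\mR}\circ\WF(\chi_D)=\WF(\chi_D),
\end{align*}
and therefore $\ssu(f_{\MA})\subseteq\ssu(\chi_D)=\partial D$. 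Consequently $\ssu(f_{\CT})\subseteq\ssu(f_{E_0})\cup\partial D$ is compact (by A1 and boundedness of $D$) and cannot contain the unbounded line $L_{\varphi,s}$; hence \eref{artifact0} fails for every line and no streaking artifact exists. This is precisely the contrapositive of \eref{inclusionWF0}.

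For the span condition \eref{streaking3} I would start from a streak $L_{\varphi,s}$ and read off the fiber of $\WF(g)$ over the single sinogram point $(\varphi,s)$. Because $\ssu(f_{E_0})$ is compact, every $\x\in L_{\varphi,s}$ outside a large ball has $\Sigma_{\x}(f_{\MA})=\Sigma_{\x}(f_{\CT})\neq\emptyset$; the singularities produced along $L_{\varphi,s}$ in the normal codirection $\bth$ are exactly the backprojection of the fiber $\Sigma_{(\varphi,s)}(g)$, and such a singularity at $\x$ forces $(-\x\cdot\bth^{\perp},1)\in\Sigma_{(\varphi,s)}(g)$. Letting $\x$ run along the unbounded line, $t:=\x\cdot\bth^{\perp}$ assumes at least two distinct values, and the covectors $(-t_{1},1),(-t_{2},1)$ are linearly independent, so $\Sigma_{(\varphi,s)}(g)$ spans $\R^{2}$; by \eref{WFg} the same holds for $\Sigma_{(\varphi,s)}((\mR\chi_D)^{2})$.

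Finally I would transfer this $2$-dimensional span from the square to $\mR\chi_D$ by H\"ormander's product estimate: since $\mR\chi_D$ is real and continuous, $(\mR\chi_D)^{2}$ is well defined and
\begin{align*}
\Sigma_{(\varphi,s)}\big((\mR\chi_D)^{2}\big)\subseteq\Sigma_{(\varphi,s)}(\mR\chi_D)\cup\big(\Sigma_{(\varphi,s)}(\mR\chi_D)+\Sigma_{(\varphi,s)}(\mR\chi_D)\big).
\end{align*}
Were $\Sigma_{(\varphi,s)}(\mR\chi_D)$ contained in one line through the origin, every Minkowski sum would stay in that line and the square could not span $\R^{2}$; this contradiction yields $\dim(\mathrm{Span}[\Sigma_{(\varphi,s)}(\mR\chi_D)])=2$, which is \eref{streaking3}. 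The hard part will be the dimension step: I must justify that the streak's singularities in codirection $\bth$ are furnished \emph{solely} by the fiber over $(\varphi,s)$ under the clean composition $\mC_{\mR^{*}}$ — separating them from contributions backprojected from other base points $(\varphi',s')$ in other codirections — and, upstream of that, verify \eref{WFg}, i.e. that the smooth even nonlinearity $h$ applied to the merely continuous $\mR\chi_D$ genuinely introduces no singularities beyond those of $(\mR\chi_D)^{2}$.
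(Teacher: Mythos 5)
Your proposal is correct and takes essentially the same route as the paper: both reduce the artifact term to the singularities of $(\mR\chi_D)^2$ by exploiting the evenness of $y\mapsto\ln(\sinh y/y)$, then combine H\"ormander's product formula for $\WF(u^2)$ with the diagonal (Bolker) property of $\mC_{\mR^*}\circ\mC_{\mR}$ to show that streaks require a two-dimensional fiber $\Sigma_{(\varphi,s)}(\mR\chi_D)$. The only differences are presentational --- you package the nonlinearity as a single smooth $H((\mR\chi_D)^2)$ where the paper uses a double power series, and you argue the first claim by contraposition with a compactness punchline where the paper splits into the cases $\WF_2(\mR\chi_D)=\emptyset$ and $\WF_2(\mR\chi_D)\neq\emptyset$ --- and the two gaps you flag at the end (justifying $\WF(g)\subseteq\WF((\mR\chi_D)^2)$ for the nonlinearity, and attributing a streak's singularities solely to the fiber over $(\varphi,s)$) are present, unaddressed, in the paper's own proof as well.
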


Before proving the theorem, let us understand its meaning.
If $(\varphi,s)$ satisfies \eref{streaking3}, then there exist $ t_1, t_2 \in \R$ such that
\begin{align}\label{streaking4}
t_1\neq t_2 ~~\mbox{and}~~(-t_1,1),(-t_2,1)\in\Sigma_{(\varphi,s)}(\mR\chi_D).
\end{align}
Using the fact that there is a one-to-one correspondence \cite{Quinto2006} between $\WF(f_{E_0})$ and $\WF(\mR f_{E_0})$, \eref{streaking4} gives the existence of two distinct points $\y$, $\z\in\p D$  such that   \begin{align*}
(\y,\bth), (\z,\bth)\in \WF(\chi_D)
\end{align*}
where  $\bth=(\cos\varphi,\sin\varphi)$ with $\varphi$ being the angle in \eref{streaking4} \cite{Hormander1983,Oneill2006,Quinto2006}. Then $L_{\varphi,s}$ is the straight line containing two points $\y$ and $\z$ as shown in Fig. \ref{fig:streaking_illustration2}.
\begin{figure}[ht]
\begin{center}
		\includegraphics[width=1\textwidth]{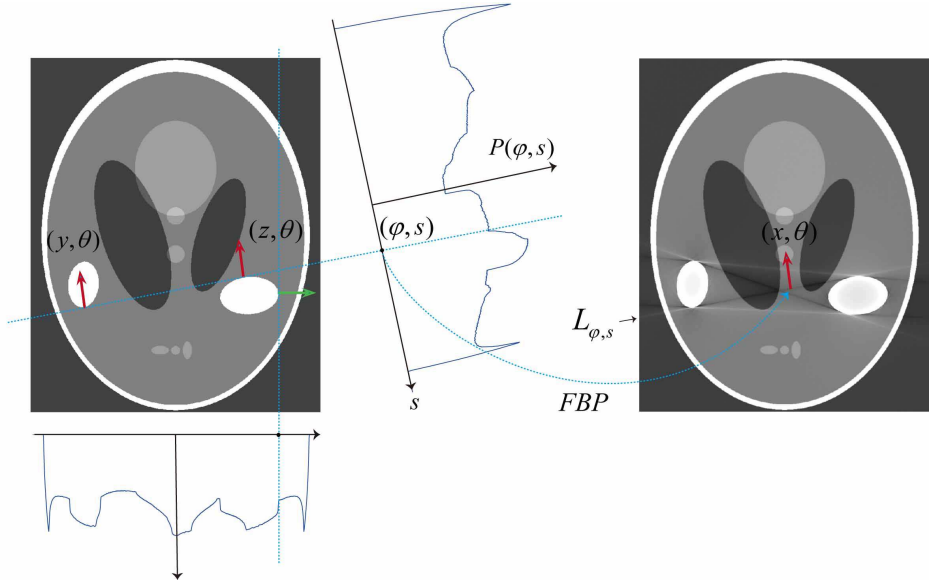}
		\caption{Illustration of streaking artifacts in the presence of metallic objects. Left figure shows the Shepp-Logan phantom with homogeneous metallic objects and the corresponding reconstructed  $f_{\CT}$ (right figure) with the display window [-0.02 0.04], respectively.}\label{fig:streaking_illustration2}
\end{center}
\end{figure}

\begin{proof} According to Proposition \ref{proposition22}, the wavefront set of $f_{\CT}$ satisfies
\begin{align}\label{WFfct}
\WF(f_{\CT})\subseteq\WF(f_{E_0})\cup\WF(f_{\MA})
\end{align}
and $f_{\MA}$ has the following expansion:
\begin{align}\label{fma2}
f_{\MA}= \f{1}{4\pi}\sum_{k=1}^\infty\f{(-1)^{k} }{k}\left[\sum_{n=1}^{\infty}\f{(\alpha\delta)^{2n}}{(2n+1)!}\mR^*\mI^{{\tiny\mbox{$-1$}}}(\mR \chi_D)^{2n}\right]^k.
\end{align}
Hence,
\begin{align}\label{fma3}
\WF(f_{\MA})\subseteq  \bigcup_{k=1}^\infty\WF\left(\mR^* \mI^{{\tiny\mbox{$-1$}}}(\mR \chi_D)^{2k}\right)
\end{align}
Since $\mI^{{\tiny\mbox{$-1$}}}$ is an elliptic pseudodifferential operator \cite{Petersen1983,Tr`eves1980},
\begin{align}\label{fma4}
\WF(f_{\MA})\subseteq \bigcup_{k=1}^\infty\WF\left(\mR^* (\mR \chi_D)^{2k}\right)
\end{align}

The wavefront set $\WF(\mR\chi_D)$ can be decomposed into
\begin{align*}
\WF(\mR\chi_D)=\WF_1(\mR\chi_D)\cup\WF_2(\mR\chi_D)
\end{align*}
where
\begin{align*}
\WF_k(\mR\chi_D)=\big\{((\varphi,s),\eet)\in\WF(\mR\chi_D):\dim(\mathrm{Span}[\Sigma_{(\varphi,s)}(\mR\chi_D)])=k\big\},~k=1,2.
\end{align*}

{\it Case 1}. Assume $\WF_2(\mR\chi_D)=\emptyset$. Since $\WF(\mR\chi_D)=\mC_{\mR}\circ\WF(\chi_D)$  due to the Bolker assumption \cite{Guillemin1990,Quinto2006},  for each $(\varphi,s)\in\ssu(\mR\chi_D)$, there exists the unique $t\in\R$ such that
\begin{align*}
((\varphi,s),(-t,1))\in\WF(\mR\chi_D).
\end{align*}
To analyze $\WF((\mR\chi_D)^2)$, we use the following fact on the product of distributions $u\in\msD'((-\pi,\pi]\times\R)$ \cite{Hormander1983,Jin2012}:
\begin{align}\label{WFproduct}
\WF(u^2)=\WF(u)\cup\big\{(\y,\eet_1+\eet_2):(\y,\eet_1),~(\y,\eet_2)\in\WF(u)~\&~\eet_1+\eet_2\neq\0\big\}
\end{align}
where $\y=(\varphi,s)$.
From \eref{WFproduct}, we have
\begin{align}\label{WFproduct2}
\Sigma_{(\varphi,s)}((\mR\chi_D)^2)=\Sigma_{(\varphi,s)}(\mR\chi_D)\subseteq\Sigma_{(\varphi,s)}(\mR f_{E_0}),
\end{align}
In general, we have
\begin{align}\label{WFproduct3}
\Sigma_{(\varphi,s)}((\mR\chi_D)^k)\subseteq\Sigma_{(\varphi,s)}(\mR f_{E_0})~~~~~\mbox{for } k=2,3,\cdots.
\end{align}
Hence, it follows from \eref{WFbackprojection} and \eref{WFproduct3} that
\begin{align*}
\WF(f_{\MA})\subseteq\bigcup_{k=1}^{\infty}\WF\left(\mR^* (\mR \chi_D)^{2k}\right)\subseteq\WF(\chi_D).
\end{align*}
This means that $\WF(f_{\MA})\subseteq \WF(f_{E_0})$  in the case when $\WF_2(\mR\chi_D)=\emptyset$. Hence, $f_{\CT}$ does not have streaking artifacts in the sense of wavefront set.

Next, we consider the remaining case.

{\it Case 2.} Assume $\WF_2(\mR\chi_D)\neq\emptyset$. Then, there exist $(\varphi,s)\in\ssu(\mR\chi_D)$ such that   $((\varphi,s),\eet)\in\WF_2(\mR\chi_D)$.  In other words, there exist $t_1$, $t_2\in\R$ such that
\begin{align*}
t_1\neq t_2~\&~((\varphi,s),(-t_1,1)),~((\varphi,s),(-t_2,1))\in\WF(\mR\chi_D).
\end{align*}
Then, it follows from the product formula \eref{WFproduct} that
\begin{align}\label{line0}
\Sigma_{(\varphi,s)}((\mR\chi_D)^2)=\big\{a(-t,1):a\neq0~\&~t\in\R\big\},
\end{align}
which means that
\begin{align}\label{line2}
\Sigma_{(\varphi,s)}((\mR\chi_D)^2)\nsubseteq\Sigma_{(\varphi,s)}(\mR\chi_D).
\end{align}
Moreover, \eref{line0} gives
\begin{align*}
\WF((\mR\chi_D)^2)\ni\left((\varphi,s),\f{1}{|t|}(-t,1)\right)\longrightarrow((\varphi,s),(\mp1,0))
~~\mbox{as } t\to\infty.\end{align*}
Hence, $((\varphi,s),(\mp1,0))$ lies in the limit (or asymptotic) cone of $\WF((\mR\chi_D)^2)$, and $((\varphi,s),(-t,1))\in\WF((\mR\chi_D)^2)$ for every $t\in\R$. Therefore, $\WF(\mR^*(\mR\chi_D)^2)$ will have a singularity which propagates along the straight line $L_{\varphi,s}$ by \eref{WFbackprojection}.

From Case 1 and Case 2, we obtain
\begin{align*}
\WF\left((\mR \chi_D)^{2}\right)\subseteq\WF(\chi_D)\q\mbox{ if and only if }\q \WF_2(\mR\chi_D)=\emptyset.\end{align*}
Moreover, if $L_{\varphi,s} \subseteq \ssu(f_\CT)$ (a streaking artifact), then it must be
\begin{align*}((\varphi,s),(-t,1)) \in\bigcup_{k=1}^{\infty}\WF\left((\mR \chi_D)^{2k}\right)\q\mbox{  for all } t\in\R,\end{align*}
 which is possible only when $\dim(\mathrm{Span}[ \Sigma_{(\varphi,s)} (\mR\chi_D)])=2$.  This completes the proof.\qed
\end{proof}

Fig. \ref{propagations} illustrates how streaking artifacts are produced by the geometric structure of $D$. In Fig. \ref{propagations}, $D$ is given by  $D=\big\{\x\in\R^2:|\x|\leq1~\mbox{and }x_1, x_2\geq0\big\}$ so that  $\ssu(\chi_D)$ contains the line segments $L_{0,0}\cap \p D$ and $L_{\f{\pi}{2},0}\cap \p D$. Using arguments in the proof of Theorem \ref{th32}, we have
\begin{align*}
\dim(\mathrm{Span}[ \Sigma_{(\varphi,s)} (\mR\chi_D)])=2\q\mbox{if } (\varphi,s)= (0,0)~\mbox{or} ~(\pi/2,0).
\end{align*}
Hence, the lines $L_{0,0}$ and $L_{\f{\pi}{2},0}$ can be included in $\ssu(\mR^*\mI^{{\tiny\mbox{$-1$}}}((\mR\chi_D)^2)$. Fig. \ref{propagations} shows that the lines $L_{0,0}$ and $L_{\f{\pi}{2},0}$ are streaking artifacts.

\begin{figure}[ht]
\begin{center}
		\includegraphics[width=1\textwidth]{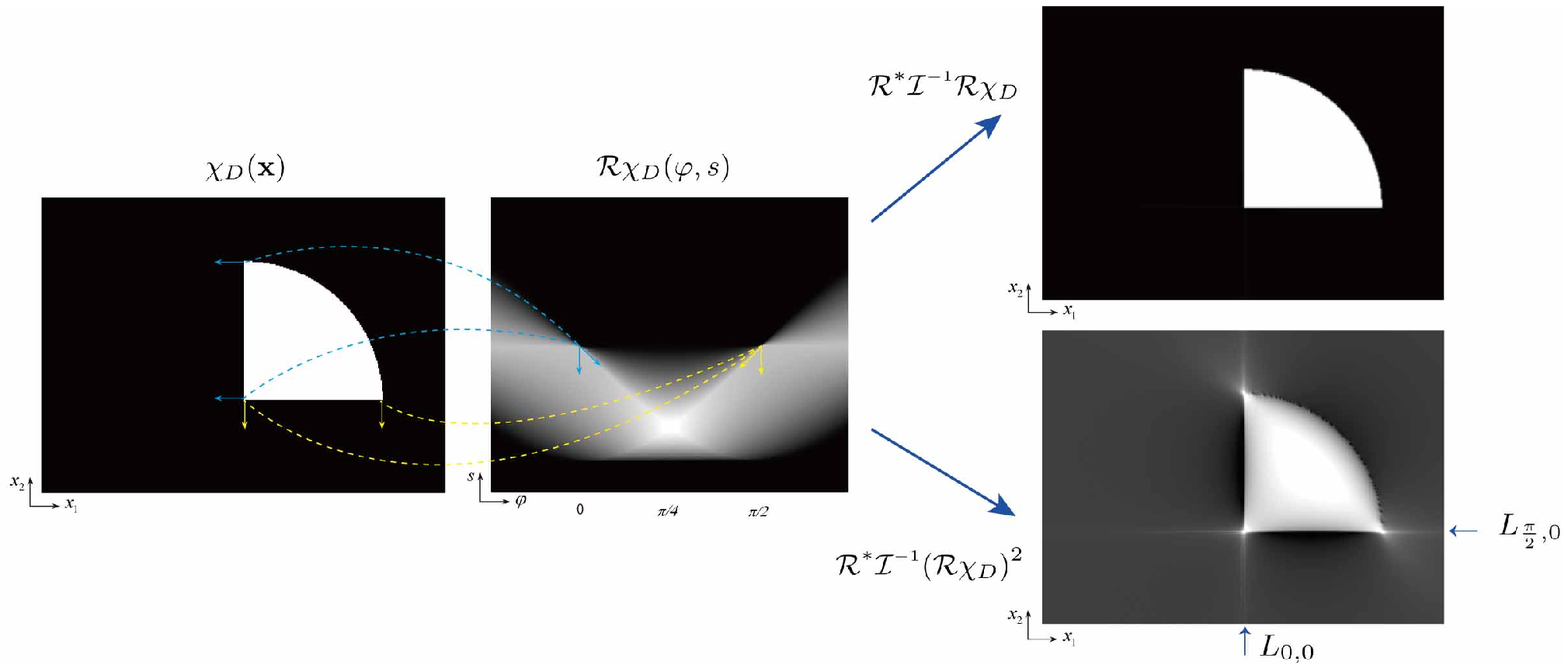}
				\caption{Illustration of streaking artifacts in $\mR^*\mI^{\tiny{-1}}((\mR\chi_D)^2)$. As described in Theorem \ref{th32}, streaking artifacts in $\mR^*\mI^{{\tiny\mbox{$-1$}}}((\mR\chi_D)^2)$ are produced only when $(\varphi,s)\in\ssu(\mR\chi_D)$ satisfies $\dim(\mathrm{Span}[ \Sigma_{(\varphi,s)} (\mR\chi_D)])=2$. In contrast, $\mR^*\mI^{{\tiny\mbox{$-1$}}}\mR\chi_D$ has no streaking artifact because of one to one correspondence between $\WF(\mR \chi_D)$ and $\WF(\mR^*\mI^{{\tiny\mbox{$-1$}}}\mR\chi_D)=\WF(\chi_D)$ even though $\dim(\mathrm{Span}[\Sigma_{(\varphi,s)} (\mR\chi_D)])=2$.}
		\label{propagations}
\end{center}
\end{figure}

Next, we restrict ourselves to the case where $D$ is simply connected. The following assertion is a direct consequence of Theorem 1.
\begin{theorem}\label{th31} Let $D\subseteq\R^2$ denote a metal region with the connected $C^2$ boundary $\p D$.  If $D$ is strictly convex, then the CT image $f_{\CT}$ does not have the streaking artifacts in the sense of wavefront set
\begin{align*}
\WF(f_{\CT})\subseteq\WF(f_{E_0}).
\end{align*}
\end{theorem}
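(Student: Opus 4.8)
The plan is to reduce the statement, via Theorem~\ref{th32}, to a purely geometric fact about strictly convex bodies: that no two distinct boundary points can share both a conormal direction $\bth$ and the same projection $\x\cdot\bth$. Indeed, Theorem~\ref{th32} shows that a streaking artifact along $L_{\varphi,s}$ forces $\dim(\mathrm{Span}[\Sigma_{(\varphi,s)}(\mR\chi_D)])=2$, and its proof establishes that $\WF((\mR\chi_D)^2)\subseteq\WF(\chi_D)$ precisely when $\WF_2(\mR\chi_D)=\emptyset$. Combined with \eref{WFfct} and \eref{fma4}, it therefore suffices to prove that strict convexity forces $\WF_2(\mR\chi_D)=\emptyset$.

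First I would record the standard description of the wavefront set of the characteristic function of a domain with $C^2$ boundary: $\WF(\chi_D)=\{(\y,a\mathbf n(\y)):\y\in\p D,\ a\neq0\}$, where $\mathbf n(\y)$ is the unit normal to $\p D$ at $\y$; that is, $\WF(\chi_D)$ is the nonzero conormal bundle $N^*(\p D)$. Next I would invoke the Bolker assumption (the equality $\WF(\mR\chi_D)=\mC_{\mR}\circ\WF(\chi_D)$) together with the one-to-one correspondence already used after Theorem~\ref{th32} to translate the condition $\dim(\mathrm{Span}[\Sigma_{(\varphi,s)}(\mR\chi_D)])=2$ into its geometric form, as in \eref{streaking4}: there exist two \emph{distinct} points $\y,\z\in\p D$ whose conormals are both parallel to $\bth=(\cos\varphi,\sin\varphi)$ and which satisfy $\y\cdot\bth=\z\cdot\bth=s$.

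The geometric core is then short. That the conormal at $\y$ (respectively $\z$) is parallel to $\bth$ means the tangent line to $\p D$ there is $\{\x:\x\cdot\bth=\y\cdot\bth\}$, which by convexity is a supporting line of $D$. The common value $\y\cdot\bth=\z\cdot\bth=s$ forces these two supporting lines to coincide with the single line $\ell=\{\x:\x\cdot\bth=s\}=L_{\varphi,s}$, so both $\y$ and $\z$ lie in $\ell\cap\p D$. But strict convexity means precisely that every supporting line meets $\p D$ in a single point (equivalently, $\p D$ contains no line segment), whence $\y=\z$, contradicting distinctness. Therefore $\WF_2(\mR\chi_D)=\emptyset$, and Case~1 of Theorem~\ref{th32} yields $\WF(f_{\MA})\subseteq\WF(f_{E_0})$, hence $\WF(f_{\CT})\subseteq\WF(f_{E_0})$ by \eref{WFfct}.

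I expect the main obstacle to be the translation in the second step, passing from the analytic condition on $\Sigma_{(\varphi,s)}(\mR\chi_D)$ to the geometric configuration of two boundary points on a common supporting line. This requires tracking the canonical relation $\mC_{\mR}$ carefully, so that the two values $t_1\neq t_2$ in \eref{streaking4} correspond exactly to distinct $\bth^\perp$-coordinates of $\y$ and $\z$ while the shared $s$ fixes their $\bth$-coordinate, and it requires checking that the Bolker/one-to-one correspondence remains valid for $\chi_D$ with only a $C^2$ boundary. Once this dictionary is in place, the strict-convexity step itself is immediate.
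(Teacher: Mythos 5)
Your proposal is correct and follows essentially the same route as the paper: both reduce the claim via Theorem~\ref{th32} to showing $\dim(\mathrm{Span}[\Sigma_{(\varphi,s)}(\mR\chi_D)])=1$ everywhere on $\ssu(\mR\chi_D)$, using the conormal-bundle description of $\WF(\chi_D)$, the Bolker relation $\WF(\mR\chi_D)=\mC_{\mR}\circ\WF(\chi_D)$, and the fact that a tangent (supporting) line of a strictly convex body meets $\p D$ in exactly one point. The only cosmetic difference is that you argue by contradiction where the paper states the geometric fact directly.
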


\begin{proof} Recall that $\WF(\chi_D)=\big\{(\x,a\bth):\x\in\p D,~a\neq0,~\&~\bth\perp T_{\x}(\p D)\big\}$
where $T_{\x}(\p D)$ is the tangent space of $\p D$ at $\x$. Since $D$ is strictly convex,
\begin{align*}
L_{\varphi,\x\cdot\bth} \cap \p D =\{\x\}\q\mbox{for all } (\x,\bth)\in \WF(\chi_D).
\end{align*}
Since $\WF(\mR\chi_D)=\mC_{\mR}\circ\WF(\chi_D)$, we have
\begin{align*}
\dim(\mathrm{Span}[ \Sigma_{(\varphi,s)} (\mR\chi_D)])=1\q\mbox{for all } (\varphi,s)\in \ssu(\mR\chi_D).
\end{align*}
From Theorem \ref{th32}, $f_{\CT}$ does not have streaking artifacts. (See Fig. \ref{fig:streaking_illustration}.) \qed

\end{proof}
Theorem \ref{th31} implies that we have
\begin{align}
\WF(f_{\CT})\nsubseteq\WF(f_{E_0}).
\end{align}
only when if the metal region $D$ is not strictly convex.  In other words, the streaking artifacts in $f_{\CT}$ are related with the geometry of $D$, as shown in Fig. \ref{fig:streaking_illustration}. In the figure, we use the Shepp-Logan phantom as $f_{E_0}$ and the homogeneous metallic objects $\chi_D$ with the various geometries are added to illustrate the streaking artifacts in the reconstructed image $f_{\CT}$.

\begin{figure}[ht]
\begin{center}
		\includegraphics[width=1\textwidth]{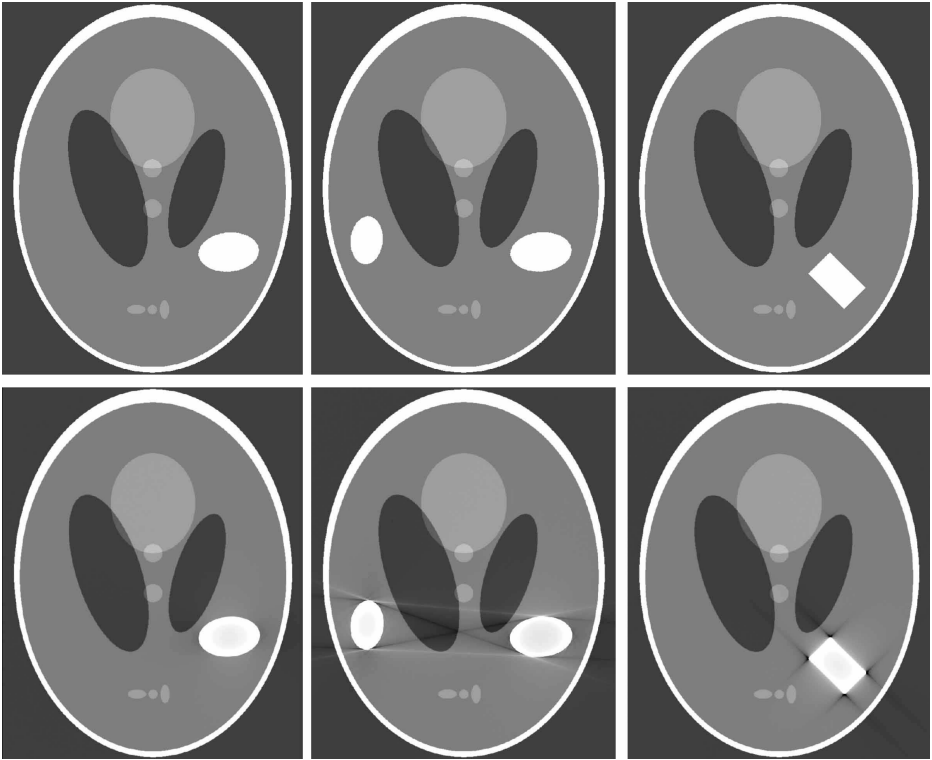}
		\caption{Illustration of streaking artifacts in the presence of metallic objects. First row and second row shows the Shepp-Logan phantom $f_{E_0}$ with various metallic objects and the corresponding reconstructed  $f_{\CT}$ \eref{fct} with the display window [-0.02 0.04], respectively.}\label{fig:streaking_illustration}
\end{center}
\end{figure}
According to Theorem \ref{th32} and Proposition \ref{proposition22},  metal streaking artifacts are due to the severe nonlinearity of the X-ray data $P$ with respect to the geometry of the metallic subject $D$. Metal streaking artifacts are included in the union of tangent space $T_{\x}(\p D)$, which is a tangent space $T_{\tilde\x}(\p D)$ of another point $\tilde\x\in \p D$. In the case of single metallic object with a strictly convex boundary, the reconstructed CT image has no streaking artifact.  If $D$ consists of two simply connected domains $D_1$ and $D_2$ with $C^2$ boundaries respectively, there are four different tangent lines touching both the metal domains $\p D_1$ and $\p D_2$.

\section{Other Causes of Streaking Artifacts}\label{othereffects}
In the previous section, we investigate the metal streaking artifacts caused by beam hardening effects due to poly chromatic X-ray sources.  The main difficulty in handling such artifacts comes from the nonlinear relationship between the projection $P$ and $f_E$.   We provide a rigorous analysis of the way  in which beam hardening effects generate wavefront-shaped streak artifacts that appear near materials such as metal.

Compton scattering  \cite{Glover1982}  can also cause streaking artifacts by altering the direction and energy of the X-ray beams. It  causes the X-ray data $P(\varphi,s)$ to deviate from the range of the Radon transform, and gives a nonlinear relation between $P$ and $f_E$. We consider the following simple model of Compton scattering using a monochromatic X-ray source \cite{DeMan1998,Glover1982}; let $f_{E_0}$ be a piece-wise constant function given by
\begin{align}\label{f_E0_piecewiseconst}
f_{E_0}=\sum_{n=0}^N\alpha_n\chi_{D_n}
\end{align}
where $D_n$'s $(n=1,2,\cdots,N)$ are subdomains of a strictly convex set $D_0$ containing a cross-sectional slice to be imaged and $\alpha_n$'s are positive constants.  Then we assume that the projection data $P$ is given as \cite{Glover1982}
\begin{align}\label{P_scatter}
P(\varphi,s)=-\ln\left(\exp\big\{-\mR f_{E_0}(\varphi,s)\big\}+c\chi_{\mQ}(\varphi,s)\right)\q(\forall (\varphi,s)\in (-\pi,\pi]\times \R),
\end{align}
where $c$ is a positive constant representing the scattered intensities approximately (See Remark \ref{rmk1} below) and $\mQ=\su(\mR\chi_{D_0})$.

\begin{remark}\label{rmk1} In 1982, Glover observed that the scattered radiations are relatively uniform for the round metallic materials \cite{Glover1982}.  Hence, we can approximate the scatter intensity as a constant and write the projection $P(\varphi,s)$ as \eref{P_scatter}.
\end{remark}

The following corollary explains that the nature of the scatter artifacts is similar to the beam hardening artifacts and  Fig. \ref{streaking_discussion} (b) shows these phenomena described in \cite{DeMan1998,Glover1982}.

\begin{corollary}\label{col1}  Let $f_{E_0}$ be a piece-wise constant function given by \eref{f_E0_piecewiseconst}.  Assume that the projection data $P(\varphi,s)$ is the function in \eref{P_scatter}. If a line $L_{\varphi,s}$ is a streaking artifact of $f_\CT$ in the sense of wavefront set \eref{artifact0}, then $(\varphi,s)$ satisfies
\begin{align}\label{streaking5}
\dim\left(\mathrm{Span}[\Sigma_{(\varphi,s)}(\mR\chi_{\cup_{n=1}^N D_n})]\right)=2
\end{align}
\end{corollary}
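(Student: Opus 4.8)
The plan is to follow the architecture of the proof of Theorem \ref{th32}, with $\cup_{n=1}^N D_n$ playing the role that $D$ played there and the strictly convex envelope $D_0$ playing a purely passive, streak-free role. First I would isolate the artifact part of the reconstruction. Factoring $\exp\{-\mR f_{E_0}\}$ out of \eref{P_scatter} gives
\begin{align*}
P(\varphi,s)=\mR f_{E_0}(\varphi,s)-\ln\left(1+c\,\chi_{\mQ}(\varphi,s)\,e^{\mR f_{E_0}(\varphi,s)}\right),
\end{align*}
and since every $D_n\subseteq D_0$ forces $\su(\mR\chi_{D_n})\subseteq\mQ$, the logarithmic term vanishes off $\mQ$, so it equals $\chi_{\mQ}\ln(1+c\,e^{\mR f_{E_0}})$. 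Applying $\f{1}{4\pi}\mR^*\mI^{{\tiny\mbox{$-1$}}}$ as in Proposition \ref{proposition22} then yields $f_{\CT}=f_{E_0}+f_{\mathrm{sc}}$ with
\begin{align*}
f_{\mathrm{sc}}=-\f{1}{4\pi}\mR^*\mI^{{\tiny\mbox{$-1$}}}\left[\chi_{\mQ}\ln\left(1+c\,e^{\mR f_{E_0}}\right)\right].
\end{align*}

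Next I would reduce the question to a wavefront computation. Because $1+c\,e^{\mR f_{E_0}}\geq 1+c>0$ (as $\alpha_n>0$ forces $\mR f_{E_0}\geq0$), the nonlinearity $t\mapsto\ln(1+c\,e^{t})$ is smooth, so composition cannot enlarge the wavefront set and $\WF(\ln(1+c\,e^{\mR f_{E_0}}))\subseteq\WF(\mR f_{E_0})\subseteq\bigcup_{n=0}^N\WF(\mR\chi_{D_n})$; note this replaces the power-series step \eref{fma2}, since here $c\,e^{\mR f_{E_0}}$ need not be small. As $\mI^{{\tiny\mbox{$-1$}}}$ is elliptic it preserves wavefront sets, and $\mR^*$ propagates them through $\mC_{\mR^*}$ exactly as in \eref{WFbackprojection}; hence the streaks of $f_{\mathrm{sc}}$ are governed by $\WF(\chi_{\mQ}\ln(1+c\,e^{\mR f_{E_0}}))$, which by the bilinear analogue of the product formula \eref{WFproduct} is contained in $\WF(\chi_{\mQ})$, $\WF(\mR f_{E_0})$, and the sums of covectors occurring at common singular points.

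The crux is to show that $D_0$ and $\chi_{\mQ}$ generate no streaks by themselves, so that any two-dimensional span must come from the interior objects. Since $D_0$ is strictly convex, the argument of Theorem \ref{th31} gives $\ssu(\mR\chi_{D_0})=\p\mQ$ with $\WF(\mR\chi_{D_0})$ one-dimensional at each point and directed along $a(-\x\cdot\bth^{\perp},1)$; a short support-function computation ($s'(\varphi)=\x\cdot\bth^{\perp}$ at the tangent point) shows this is exactly the conormal to the curve $\p\mQ$, i.e. the same direction carried by $\WF(\chi_{\mQ})$. Thus on $\p\mQ$ the two factors contribute parallel covectors, their sum stays one-dimensional, and $\mC_{\mR^*}$ sends it to the single tangent point on $\p D_0$ --- reconstructing the edge of $D_0$, not a full line. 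Consequently, as in Case 2 of Theorem \ref{th32}, a line $L_{\varphi,s}$ can be a streak in the sense of \eref{artifact0} only where $\Sigma_{(\varphi,s)}$ is two-dimensional, and in the interior of $\mQ$ (where $\chi_{\mQ}$ and $\mR\chi_{D_0}$ are smooth) this forces two distinct tangent directions coming from $\cup_{n=1}^N D_n$, which is precisely \eref{streaking5}; backprojection then propagates the singularity along all of $L_{\varphi,s}$ verbatim as in Theorem \ref{th32}.

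The main obstacle is the boundary $\p\mQ$. A line tangent simultaneously to $\p D_0$ and to some $\p D_n$ (a bitangent of the envelope and an interior object) produces, at one point of $\p\mQ$, the conormal covector from $\chi_{\mQ}$ together with a distinct covector from $D_n$, hence a genuine two-dimensional span and a streak for which only one interior direction is present, so $\dim(\mathrm{Span}[\Sigma_{(\varphi,s)}(\mR\chi_{\cup_{n=1}^N D_n})])=1$ there. To obtain \eref{streaking5} cleanly one must discard this exceptional configuration, e.g. by assuming each $\overline{D_n}\subseteq\mathrm{int}(D_0)$ with no line tangent to both $\p D_0$ and $\p D_n$ (such coincidences occur only for a lower-dimensional set of angles); the remaining, genuinely geometry-driven streaks are then exactly those created by pairs of interior objects, in complete analogy with Theorem \ref{th32}. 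A secondary point to verify is that composing with $\ln(1+c\,e^{(\cdot)})$ neither cancels nor manufactures the relevant directions, which holds because all $\alpha_n$ are positive and no destructive interference of the conormal singularities occurs.
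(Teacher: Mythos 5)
There is a genuine gap at the heart of your argument. The step ``the nonlinearity $t\mapsto\ln(1+c\,e^{t})$ is smooth, so composition cannot enlarge the wavefront set, hence $\WF(\ln(1+c\,e^{\mR f_{E_0}}))\subseteq\WF(\mR f_{E_0})$'' is false, and it is false in exactly the way that matters for this paper. A smooth nonlinearity applied to a function preserves the singular \emph{support} but not the wavefront \emph{directions}: already for $F(t)=t^2$ the product formula \eref{WFproduct} shows that $\WF(u^2)$ acquires the new covectors $\xi_1+\xi_2$ at points where $u$ carries two independent covectors (e.g.\ $u=\chi_{\{x>0\}}+\chi_{\{y>0\}}$ has a one-dimensional $\Sigma$ along each axis away from the origin, but $u^2$ contains $\chi_{\{x>0,\,y>0\}}$, whose corner singularity fills out all directions). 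This enlargement is the entire mechanism by which the paper produces streaks: in Theorem \ref{th32} the smooth function $\ln(\sinh(x)/x)$ applied to $\mR\chi_D$ generates, at a point $(\varphi,s)$ with $\dim(\mathrm{Span}[\Sigma_{(\varphi,s)}(\mR\chi_D)])=2$, the integer combinations of the two covectors, whose conic closure is all of $\{a(-t,1)\}$, and backprojection then smears this over the whole line $L_{\varphi,s}$. If your claim were true, $f_{\MA}$ in Proposition \ref{proposition22} would never have streaks either. Concretely, your version loses precisely the covector sums coming from pairs of distinct interior objects $D_i,D_j$ tangent to a common line, which is what the conclusion \eref{streaking5} is about; the only two-dimensional spans surviving your argument are the $\chi_{\mQ}$-versus-$\mR f_{E_0}$ interactions on $\p\mQ$, so after your extra hypotheses you would conclude there are no streaks at all --- the argument proves too much. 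The paper instead expands both the logarithm and $\exp\{\mR f_{E_0}\}$ in power series, reducing everything to products $(c\chi_{\mQ})^m(\sum_n\alpha_n\mR\chi_{D_n})^k$, and then runs the product-formula argument of Theorem \ref{th32} on these products; that is the step you must keep.

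Two of your side observations are worth noting. Your worry that the paper's logarithm series need not converge (since $c\,e^{\mR f_{E_0}}$ is not small) is legitimate, but the remedy is a localization/Taylor-with-remainder argument around the series, not discarding the expansion. And your point about a line simultaneously tangent to $\p D_0$ and to some $\p D_n$ --- where the conormal of $\chi_{\mQ}$ and the covector from $D_n$ give a two-dimensional span even though $\dim(\mathrm{Span}[\Sigma_{(\varphi,s)}(\mR\chi_{\cup_{n\geq1}D_n})])=1$ --- is a fair criticism of the statement's bookkeeping of $D_0$, but it is an issue with the corollary's formulation rather than a substitute for the missing mechanism above.
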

\begin{proof} The projection data $P(\varphi,s)$ in \eref{P_scatter} can be expressed as
\begin{align*}
P&=-\ln\left(\exp\{-\mR f_{E_0}\} (1+c\chi_{\mQ}\exp\{\mR f_{E_0}\}\right)\\
&=\mR f_{E_0}+\sum_{m=1}^{\infty}\f{(-1)^{m}}{m}  (c\chi_{\mQ})^m\left[\exp\{\mR f_{E_0}\}\right]^m\\
&=\mR f_{E_0}+\sum_{m=1}^{\infty}\f{(-1)^{m}}{m}  (c\chi_{\mQ})^m\left[\sum_{k=0}^\infty \f{1}{k!}\left\{\sum_{n=0}^N \alpha_n\mR \chi_{D_n}\right\}^k\right]^m
\end{align*}
Applying a similar argument in the proof of Theorem \ref{th32}, we complete the proof.
\end{proof}

Corollary \ref{col1} implies that the streaking artifacts due to Compton scattering can occur between bones and metals, even when $f_E=f_{E_0}$ (See Fig. \ref{streaking_discussion} (b).)

Another potential source of streaking artifacts is photon noise. For the simple and clear explanation of the streaking artifacts due to noise, we assume that the noise $N$ in X-ray intensity attenuation $I(\varphi,s)=\exp\big\{-P(\varphi,s)\big\}$ is given by
\begin{align}\label{P_noise0}
N(\varphi,s)=\sum_{k=1}^K c_k\delta(\varphi-\varphi_k,s-s_k),
\end{align}
where $\delta(\cdot,\cdot)$ is the Dirac delta function and $c_1,\cdots,c_K$ are positive constants which follow the Poisson probability distribution \cite{Guan1996}.  With this assumption, the projection data $P$ is given by
\begin{align}\label{P_noise}
P(\varphi,s)=-\ln\left(\exp\big\{-\mR f_{E_0}(\varphi,s)\big\}+\sum_{k=1}^K c_k\delta(\varphi-\varphi_k,s-s_k)\right)
\end{align}
\begin{corollary}\label{col2}  Assume that the projection data $P(\varphi,s)$ is given as  \eref{P_noise}. Then, streaking artifacts of $f_\CT$ are included in the union of lines $\cup_{k=1}^KL_{\varphi_k,s_k}$.
\end{corollary}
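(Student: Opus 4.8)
The plan is to follow the same strategy used in the proofs of Theorem \ref{th32} and Corollary \ref{col1}: expand $P$ about $\mR f_{E_0}$, isolate the artifact term $f_{\MA}=f_{\CT}-f_{E_0}=\frac{1}{4\pi}\mR^*\mI^{-1}(P-\mR f_{E_0})$, and then track where the filtered backprojection sends the singularities of the perturbation. First I would factor $\exp\{-\mR f_{E_0}\}$ out of the logarithm in \eref{P_noise} and expand the resulting $-\ln(1+\cdots)$ as a power series, exactly as in Corollary \ref{col1}, writing
\begin{align*}
P-\mR f_{E_0}=\sum_{m=1}^{\infty}\frac{(-1)^m}{m}\left(\sum_{k=1}^K c_k\delta(\varphi-\varphi_k,s-s_k)\right)^m\left[\exp\{\mR f_{E_0}\}\right]^m.
\end{align*}

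The key observation is that the perturbation is singular only over the finitely many points $(\varphi_k,s_k)$. Indeed, since the Dirac masses $\delta(\varphi-\varphi_k,s-s_k)$ are supported at isolated points and $\sum_k c_k\delta(\varphi-\varphi_k,s-s_k)$ vanishes as a distribution on any open set avoiding $\{(\varphi_k,s_k)\}_{k=1}^K$, the identity $P=\mR f_{E_0}$ holds off that finite set. Hence $\su(P-\mR f_{E_0})\subseteq\{(\varphi_k,s_k)\}_{k=1}^K$. A distribution supported at a single point has wavefront set equal to the whole cotangent fibre over that point, so
\begin{align*}
\WF(P-\mR f_{E_0})\subseteq\bigcup_{k=1}^K\{((\varphi_k,s_k),\eet):\eet\in\R^2\setminus\{\0\}\}.
\end{align*}

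Next I would push this through $\mR^*\mI^{-1}$. Since $\mI^{-1}$ is an elliptic pseudodifferential operator acting only in the $s$-variable, it preserves the wavefront set, so $\mI^{-1}(P-\mR f_{E_0})$ is still singular only over the $(\varphi_k,s_k)$, with every covector direction present. Applying the canonical relation $\mC_{\mR^*}$ from \eref{WFbackprojection}, a covector $((\varphi_k,s_k),a(-\x\cdot\bth_k^\perp,1))$ in the fibre over $(\varphi_k,s_k)$ is mapped to $(\x,a\bth_k)$ with $\x\cdot\bth_k=s_k$, i.e.\ to points $\x$ lying on the line $L_{\varphi_k,s_k}$. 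Sweeping over all admissible covectors therefore spreads the point singularity at $(\varphi_k,s_k)$ into a singularity propagating along the entire line $L_{\varphi_k,s_k}$, exactly as in the second case of the proof of Theorem \ref{th32}. This yields $\ssu(f_{\MA})\subseteq\bigcup_{k=1}^K L_{\varphi_k,s_k}$, and hence every streaking artifact of $f_{\CT}$ is included in $\bigcup_{k=1}^K L_{\varphi_k,s_k}$.

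The main obstacle will be the appearance of the ill-defined powers $\delta(\varphi-\varphi_k,s-s_k)^m$ for $m\geq2$ in the series, which are not honest distributions (H\"ormander's product criterion fails, since opposite covectors over the same base point sum to zero). I would circumvent this by not relying on the term-by-term expansion for the localization step: the support argument of the second paragraph shows directly that $P-\mR f_{E_0}$ is concentrated on the finite point set, so the wavefront localization is rigorous without invoking the higher powers. The power series then serves only as a formal bookkeeping device parallel to Corollary \ref{col1}, while the actual conclusion rests on the support statement together with the propagation of point singularities into lines under $\mR^*$.
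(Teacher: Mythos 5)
Your proposal is correct and follows essentially the same route as the paper: expand $P-\mR f_{E_0}$ by factoring out $\exp\{-\mR f_{E_0}\}$, observe that the perturbation is singular only over the points $(\varphi_k,s_k)$ with the full cotangent fibre (equivalently, $\dim(\mathrm{Span}[\Sigma_{(\varphi_k,s_k)}(P)])=2$) present there, and invoke the Case~2 mechanism of Theorem~\ref{th32} to spread each point singularity along $L_{\varphi_k,s_k}$ under $\mR^*\mI^{-1}$. Your one departure --- replacing the term-by-term power series (whose higher powers of Dirac deltas are not well-defined distributions) by a direct support argument for $P-\mR f_{E_0}$ --- is a refinement rather than a different proof, and it actually makes the localization step more rigorous than the paper's formal expansion.
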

\begin{proof} The projection data $P(\varphi,s)$ in \eref{P_noise} can be expressed  as
\begin{align*}
P(\varphi,s)&=\mR f_{E_0}(\varphi,s)-\ln\left(1+\sum_{k=1}^K c_k\delta(\varphi-\varphi_k,s-s_k)\exp\{\mR f_{E_0}(\varphi_k,s_k)\}\right)\\
&=\mR f_{E_0}(\varphi,s)+\sum_{m=1}^{\infty}\f{(-1)^m}{m}\left[\sum_{k=1}^Kc_k\delta(\varphi-\varphi_k,s-s_k)\exp\left\{\mR f_{E_0}(\varphi_k,s_k)\right\}\right]^m.
\end{align*}
Hence, we have
\begin{align*}
\big\{(\varphi,s)\in (-\pi,\pi]\times\R: \dim\left(\mathrm{Span} [ \Sigma_{(\varphi,s)}(P)]\right)=2\big\}=\big\{(\varphi_k,s_k): k=1,\cdots, K\big\}
\end{align*}
and the result follows from a similar argument in the proof of Theorem \ref{th32}. \end{proof}

Fig. \ref{streaking_discussion} (c) shows the streaking artifacts caused by noises in the form of \eref{P_noise}.

\begin{figure}[ht]
\begin{center}
		\includegraphics[width=1\textwidth]{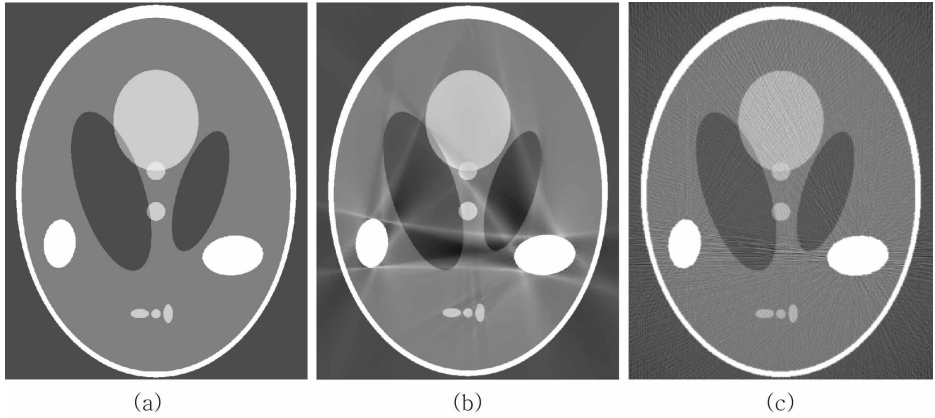}
		\caption{Illustration of streaking artifacts in the presence of metallic objects. First, second and third figure shows reference  image and reconstructed image due to scattering and noise with the display window [-0.02 0.04], respectively.}\label{streaking_discussion}
\end{center}
\end{figure}

One of other potential sources of metal artifacts is photon starvation, which occurs when insufficient (possibly zero) photons reach the detector as the X-ray beam passes through a metallic object.  This photon starvation generates a similar effect to the noise effect in the metal region \cite{Barrett2004}, and the noise contribution is greater from projections that pass through metallic objects than from those that do not.  Consequently, these noise effects lead to serious streaking artifacts in the reconstructed image at the points where the beam passes through metallic objects.   Numerical and experimental results of streaking artifacts due to photon starvation are well depicted in works by \cite{Barrett2004,Mori2013,vZabic2013}.  Streaking artifacts due to photon starvation are verified to be prominent along lines passing through multiple or strong  objects with high attenuation coefficients (e.g.  metal).  Finally, we should mention that streaking artifacts caused by scattering, noise, and photon starvation can be amplified when they are associated with metallic objects \cite{Barrett2004,DeMan1998,Mori2013,vZabic2013} (See Fig. \ref{real_image}).

To validate our main results in the clinical CT case, we include CT image of one author's teeth and mandible in Fig. \ref{real_image}.  As shown in this figure, most streaking artifacts occur along the tangent line of boundary of metallic objects.  Besides, due to scattering and/or noise effect, the streaking artifacts can occur between the metallic object and the bone, as described in the corollaries of this section.

\begin{figure}[ht]
\begin{center}
		\includegraphics[width=0.7\textwidth]{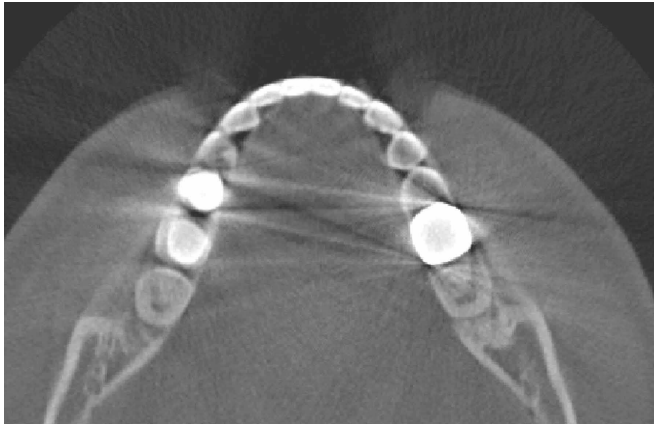}
		\caption{Illustration of streaking artifacts in clinical CT image.  The presence of the streaking artifacts in the clinical CT image mostly agrees with the description in our main theorems and their corollaries.}\label{real_image}
\end{center}
\end{figure}

\section{Remark on MAR and Discussion}
The growing number of patients with metallic implants has increased the importance of metal artifact reduction (MAR) in the application of clinical CT for diagnostic imaging for craniomaxillofacial and orthopedic surgery, in oncology; and when dental implants and prosthodontics are present.  The goal of MAR is that for a given projection data $P\in\msE'((-\pi,\pi]\times\R)$, we aim to find $P^{\natural}\in\msE'((-\pi,\pi]\times\R)$ such that
\begin{enumerate}
\item[~~~1.] $P^{\natural}\in\mbox{Range space}=\big\{\mR f^{\natural}:f^{\natural}\in\msE'(\R^2)\big\}$,
\item[~~~2.] $\|P-P^{\natural}\|$ is small with a suitable norm.
\end{enumerate}

Lewitt and Bates \cite{Lewitt1978} introduced the first method for reducing metal artifacts in the late 1970s.  Most existing methods incorporate some variation or combination of interpolation methods and iterative reconstruction methods with some regularizations \cite{Abdoli2010,Mouton2012}.

To explain the interpolation methods, we denote $D_{\varphi}=\su(\mR\chi_D(\varphi,\cdot))$ for each $\varphi\in(-\pi,\pi]$.  Then $P(\varphi,s)$ can be expressed as
\begin{align*}
P(\varphi,s)=P_{\mathrm{int}}(\varphi,s)\chi_{D_{\varphi}}(s)+P_{\mathrm{ext}}(\varphi,s)\chi_{D_{\varphi}^c}(s),
\end{align*}
where $P_{\mathrm{int}}$ is the projection data in $D_{\varphi}$ and $P_{\mathrm{ext}}$ outside of $D_{\varphi}$.  Interpolation methods aim to obtain $P^{\natural}$ by recovering $P_{\mathrm{int}}^{\natural}$ from the sinogram $P$ which is affected by metallic subjects under the assumption that
\begin{align*}
P^{\natural}=P_{\mathrm{ext}}~~~\mbox{on}~~~\p D_{\varphi}.
\end{align*}
The restoring techniques for $P_{\mathrm{int}}^{\natural}$ include linear interpolation \cite{DeMan2001,Kalender1987,Lewitt1978}, higher-order polynomial interpolation \cite{Abdoli2010,Bazalova2007,Roeske2003}, wavelets \cite{Zhao2001,Zhao2000}, Fourier transform \cite{Kratz2008}, tissue-class models \cite{Bal2006}, normalized interpolation methods \cite{Meyer2010}, and total variation \cite{Shen2002} or fractional-order inpainting methods \cite{Zhang2011}.  However, these interpolation methods suffer from inherent limitations; the mismatch of the restored $P^{\natural}$ with the range of the Radon transform \cite{Meyer2010,Muller2009}. According to our characterization, the inaccurate recovery of $P_{\mathrm{int}}^{\natural}$ using the interpolation near $\p D_{\varphi}$ will cause the additional singularities in the data, which can lead to additional streaking artifacts in the reconstructed CT image.

Park \etal \cite{Park2013} have proposed a PDE-based MAR method to recover background data hidden by metal's X-ray data from the observation that the Laplacian of projection data can capture the dental information influenced by the data from metal.  This method is based on the assumption that the given data $P$ can be decomposed into the two parts:
\begin{align}\label{P_decompose}
P=u^{\natural}+w
\end{align}
where $w$ denotes the projection data from metal which contains the discrepancy, and $u^{\natural}$ is the background data in the absence of metal which is the Radon transform of the background image $f_{\mathrm{b}}^{\natural}$ \cite{Park2013}. Then $u^{\natural}$ can be obtained by solving the following Poisson's equation:
\begin{align}\label{MAR_Poisson}
\left\{\begin{array}{cl}
\na^2 u_{\mathrm{int}}^{\natural}=\zeta(\na^2 P_{\mathrm{int}})~&~\mbox{in}~D_{\varphi}\\
u_{\mathrm{int}}^{\natural}=P_{\mathrm{ext}}~&~\mbox{in}~\p D_{\varphi}\end{array}\right.
\end{align}
where $\zeta$ is an operator properly chosen to keep $\ssu(P_{\mathrm{int}})$ in $D_{\varphi}$ \cite{Park2013}.  Since we can recover $\ssu(u^{\natural})=\ssu(\mR f_{\mathrm{b}}^{\natural})\subseteq\ssu (P)$ in $D_{\varphi}$ using \eref{MAR_Poisson}, the corresponding $\ssu (f_{\mathrm{b}}^{\natural})$ in the reconstructed CT image can be restored based on the one to one correspondence between $\WF(f_{\mathrm{b}}^{\natural})$ and $\WF(u^{\natural})=\WF(\mR f_{\mathrm{b}}^{\natural})$.  However, this method has some limitations that it may not work well in the presence of thick and/or multiple metallic objects.

Iterative reconstruction methods generally employ the data fitting method to reduce the streaking artifacts caused by the discrepancy between the given data $P$ and $\mR f^{\natural}$, which can be achieved from the following minimization problem:
\begin{align}\label{MAR_iterative}
f^{\natural}=\arg\min~\mathrm{Reg}(f^{\natural})+\lambda\mathrm{Fit}(\mR f^{\natural}-P).
\end{align}
Here, $\mathrm{Reg}(f^{\natural})$ denotes the regularization term which enforces the regularity of $f^{\natural}$, $\mathrm{Fit}(\mR f^{\natural}-P)$ is the fidelity term which forces the mismatch between $\mR f^{\natural}$ and $P$ to be small, and $\lambda>0$ is the regularization parameter.  The iterative methods include the statistical data-fitting methods such as maximum-likelihood for transmission \cite{DeMan2000}, expectation maximization \cite{Shepp1982,Wang1996}, and iterative maximum-likelihood polychromatic algorithm for CT \cite{DeMan2001}.  These data-fitting methods can alleviate the streaking artifacts by reducing the mismatch, even though these methods are known to have a disadvantage of huge computational costs \cite{J.Choi2011,Park2013}.

The spatial prior information of $f^{\natural}$ can be imposed as a regularization term to obtain $f^{\natural}$ with the desired property \cite{Yan2011}.  For example, the total variation $\|f^{\natural}\|_{\TV}$ can be used to reduce the streaking artifacts:
\begin{align}
f^{\natural}=\arg\min~\|f^{\natural}\|_{\TV}+\lambda\mathrm{Fit}(\mR f^{\natural}-P).
\end{align}
Based on the theory of compressed sensing (CS) that the sparse signals can be recovered exactly and stably from a partial measurement data \cite{Cand`es2006,Donoho2006}, we can obtain $f^{\sharp}$ stably using only a part of the projection data \cite{Yan2011}.  However, due to the inherent nature of the total variation, this model may not be effective to the clinical CT data.  Indeed, the inappropriate choice of $\lambda$ will lead to the lack of realistic variations, which may hamper the clinical and/or scientific applications.

Recently, Choi \etal \cite{J.Choi2011} propose an MAR method to reconstruct the artifact-free metal part image $f_{\mathrm{m}}^{\natural}:=f^{\natural}-f_{\mathrm{b}}^{\natural}$ after recovering $u^{\natural}$ by the linear interpolation with taking advantage of the spatial sparsity of $f_{\mathrm{m}}^{\natural}$:
\begin{align}\label{MAR_CS}
\begin{array}{ll}
\mbox{minimize}~&~\|f_{\mathrm{m}}^{\natural}\|_1\\
\mbox{subject to}~&~\mathrm{Fit}(\mR f_{\mathrm{m}}^{\natural}-w)\leq\eps.
\end{array}
\end{align}
From the assumption that $f_{\mathrm{m}}^{\natural}$ occupies only a small portion of the image domain \cite{J.Choi2011}, the CS theory will guarantee the stable and robust recovery of $f_{\mathrm{m}}^{\natural}$ using a part of the projection data.  However, since the linear interpolation is used to obtain the background data $u^{\natural}$, there could remain the streaking artifacts in the reconstructed image.

Despite the rapid advances in CT technologies and various works seeking to reduce metal artifacts, metal streaking artifacts continue to pose difficulties, and the development of suitable reduction methods remains challenging. Previously, the wavefront set has been used to characterize artifacts due to the limited angle tomography \cite{J.Frikel2013,Quinto1993} as well as cone beam local tomography \cite{Katsevich1999,Katsevich2006}. Moreover, there have been many studies regarding the wavefront set of the Radon transform \cite{Faridani2003,Finch2003,Greenleaf1989,Quinto1980,Quinto2007,Ramm1996,Ramm1993}. To our surprise there has been no mathematical analysis of metal artifacts in terms of wavefront set.  In this paper, we report for the first time that the wavefront set can also be used to explain the characterization of metal artifacts.  As explained in our main result (Theorem \ref{th32}), metal streaking artifacts are produced only when the wavefront set of $\mR\chi_D$ does not contain the wavefront set of $(\mR\chi_D)^2$.  These characterizations lead us to explain that the streaking artifacts arise mainly from the geometry of the boundary of the metallic objects. Besides, we can provide some mathematical analysis on other factors that also cause streaking artifacts (Section \ref{othereffects}.)

These theoretical studies will be helpful for the development of MAR methods to reduce the artifacts effectively; based on our main result, the structure of streaking artifacts can be extracted from the reconstructed CT image provided that the geometry information of the teeth and mandible is given.  By doing so, we will be able to reduce the streaking artifacts effectively.  Unfortunately, it remains a future work to obtain the exact geometry of the mandible because in general we obtain the geometry of the teeth and mandible in empirical and statistical ways.  In addition, even though we have provided the qualitative nature of the metal streaking artifacts, the quantitative analysis on the metal artifacts will be required so as to reduce the artifacts based on our characterizations.  Nevertheless, it should be noted that this paper is the first approach to provide a mathematical characterization of metal artifacts in CT.  Further researches on metal artifacts will be needed to achieve more accurate reconstruction of CT image as well as more efficient MAR.

\begin{acknowledgements}
This work was supported by the National Research Foundation of Korea (NRF) grant funded by the Korean Government (MEST) (No.2011-0028868,2012R1A2A1A03
670512).
\end{acknowledgements}

\bibliographystyle{spmpsci}      


\end{document}